\newtheorem{thm}{Theorem}
\newtheorem{prop}{Proposition}
\newtheorem{lem}{Lemma}
\newtheorem{cor}{Corollary}
\newcommand{\Real}{{\bf R}}
\newcommand{\Comp}{{\bf C}}
\newcommand{\Del}{\partial}
\newcommand{\Ann}{{\rm Ann}}
\newcommand{\LT}{{\rm LT}}
\title{
  A Holonomic Ideal Annihilating the Fisher--Bingham Integral
  \footnote{MSC classes: 16S32, 16Z99,32C38,62F10}
}
\author{Tamio Koyama}
\date{}
\begin{document}
\maketitle
\begin{abstract}
We calculate the integration ideal of annihilating differential operators
of the non-normalized Fisher--Bingham distribution
and show that the ideal agrees with the set of operators for 
the Fisher--Bingham integral given in \cite{Sei}.
They conjectured that the set generates a holonomic ideal
and we prove their conjecture.
\end{abstract}

\section{Introduction}\label{intro}
The Fisher--Bingham distribution is a probability distribution on
the $n$-dimensional sphere $S^n(r)$ with the radius  $r$ defined by  
\begin{equation}\label{FBdist}
\frac{1}{F(x, y, r)} \exp(t^Txt+yt)|dt|.
\end{equation}
Here, the variable $x$ is an $(n+1)\times (n+1)$ symmetric matrix whose $(i, j)$ component is
$x_{ij}$ when $i=j$ and $x_{ij}/2$ when $i\neq j$.
The variable $y$ (resp. $t$) is a row (resp. column) vector  of length $n+1$, 
and the measure $|dt|$ is the Haar measure on $S^n(r)$.
The function $F(x, y, r)$ is the normalizing  constant defined by
\begin{equation}\label{FB}
F(x, y, r) = \int_{S^n(r)} \exp
\left(
\sum_{1\leq i \leq j \leq n+1}x_{ij}t_it_j 
+\sum_{i=1}^{n+1}y_it_i
\right)
|dt|.
\end{equation}
The integral $(\ref{FB})$ is referred to as the Fisher--Bingham integral on the sphere
$S^n(r)$.

The Fisher--Bingham distribution is used in directional statistics.
Kent studied  estimations, hypothesis testings 
and confidence regions 
with respect to the Fisher--Bingham distribution on
the $2$-dimensional sphere \cite{Kent}, and
in the book by Mardia and Jupp on directional statistics \cite[chapter 9]{Mardia},
 a definition of the Fisher--Bingham distribution having 
the same form as $(\ref{FBdist})$ and a relation with an another 
probability distribution on the sphere are explained.

We are interested in estimating the value of parameters $x$ and $y$ which maximizes
the likelihood function
$$
f(x, y) = \frac{1}{F(x, y, r)^N}
\prod_{i=1}^N \exp(t_i^Txt_i+yt_i)
$$
for given data $t_1, \cdots, t_N\in S^n$.
This problem is equivalent to estimating the value of parameters $x$ and $y$
which minimizes the function
$$
F(x, y, r)\exp\left(-\sum_{1\leq i\leq j\leq n}S_{ij}x_{ij}-S_iy_i \right)
$$
for given
$
\{S_{ij}\}_{1\leq i \leq j \leq n}, \, 
\{S_i\}_{1\leq i \leq  n} 
\subset \Real.
$
There are several approaches to solving this problem.
Among them, 
the holonomic gradient descent proposed in \cite{Sei} enables us to
estimate the value by utilizing
linear partial differential operators with polynomial 
coefficients which annihilate the Fisher--Bingham integral $(\ref{FB})$
and generate a holonomic ideal.
Let $D_d$ be the ring of differential operators
$D_d = {\bf C}\langle z_1, \dots, z_d, \Del_1, \dots, \Del_d\rangle$.
A left ideal in $D_d$ is called a holonomic ideal when the characteristic ideal 
${\rm in}_{(0, 1)}(I)$ generated by the principal symbols of $I$ in 
${\bf C}[z_1, \dots, z_d, \xi_1, \dots, \xi_d]$ 
has the Krull dimension $d$.
See, e.g., \cite[p 31, Definition 1.4.8]{SST}, \cite{Oaku1}, 
and their references for details.

In \cite{Sei}, it is shown that 
the Fisher--Bingham integral $F(x, y, r)$ is annihilated by the following 
linear partial differential operators.
\begin{equation}\label{annFB}
\begin{split}
  &\Del _{x_{ij}} - \Del _{y_i}\Del_{y_j}\quad (i\leq j), \\ 
  &\sum _{i=1}^{n+1} \Del_{x_{ii}}- r^2, \\ 
  &x_{ij}\Del_{x_{ii}}+2(x_{jj}-x_{ii})\Del _{x_{ij}}
	-x_{ij}\Del_{x_{jj}} \\
  &\quad	+\sum _{k \neq i, j}
	\left( x_{kj}\Del_{x_{ik}}-x_{ik}\Del_{x_{jk}}\right)
	+y_j\Del_{y_i} -y_i\Del _{y_j}\quad (i < j, x_{k\ell}=x_{\ell k}), \\
  & r\Del_r -2\sum _{i\leq j}x_{ij}\Del_{x_{ij}}
	- \sum_i y_i\Del _{y_i} -n. 
\end{split}
\end{equation}
They also show that $(\ref{annFB})$ generates a holonomic ideal in the
cases $n=1$ and $n=2$ by a calculation on a computer, 
and conjecture that it holds for any $n$.
We will prove this conjecture.

In order to state the main result of this paper precisely, 
let us explain the notion of the integration ideal.
For a holonomic ideal $I$ in $D_d$,
the left ideal 
$(I+\Del_{d'+1}D_d+\cdots+\Del_dD_d)\cap D_{d'}$ 
in $D_{d'}$ is called the integration ideal and it is known that
the integration ideal is a holonomic ideal in $D_{d'}$
(see, e.g., \cite[Chapter 1]{Bjork}, \cite[\S 5.5]{SST}).

In the present paper, we show that $(\ref{annFB})$ generates the integration ideal of
the annihilating ideal
$${\rm Ann}\left(\exp\left(
\sum_{1\leq i \leq j \leq n+1}x_{ij}t_it_j 
+\sum_{i=1}^{n+1}y_it_i
\right)
|dt|\right).
$$
Here, 
$
\{z_1, \dots, z_{d'}\}
 = \{x_{ij}, y_k \vert 1\leq i \leq j \leq n+1, 1\leq k  \leq n+1\}
$
and 
$
\{z_{d'+1}, \dots, z_d\} = \{t_1 , \dots, t_{n+1}\}.
$
As its corollary, we show that $(\ref{annFB})$ generates a 
holonomic ideal for any $n$ and prove the conjecture in \cite{Sei}. 
Oaku gave an algorithm for computing the integration ideal in \cite{Oaku2}. 
The proof for $n=1, 2$ are done by applying this algorithm on a computer.
We apply this algorithm for a general natural number $n$, for which the steps of the algorithm cannot necessarily be applied, and so some propositions are necessary.

In section \ref{section2}, we consider the holonomic ideal annihilating 
the Haar measure on $S^n(r)$.
In section \ref{section3}, we give generators of the holonomic ideal
which annihilates the integrand of the Fisher--Bingham integral.
In section \ref{section4}, 
we compute the integration ideal of the holonomic ideal
which is given in section \ref{section3}, 
and prove the main theorem of this paper.

\section{The Haar measure on $S^n(r)$}\label{section2}
The Riemannian metric on the $n$-dimensional sphere with radius $r$ 
is constructed
by the pullback of the standard metric 
on the $(n+1)$-dimensional Euclidean space 
$\Real ^{n+1}$ along the embedding map. 
The metric defines 
a probability measure on $S^n(r)$, which is called the Haar measure and denoted by $|dt|$.
We define a distribution $\mu_r$ with a parameter $r>0$ as 
$$
\langle \mu_r, \varphi (t) \rangle 
:=
\int_{S^{n}(r)} \varphi |dt|.
$$
Here, $\varphi(t)$ is a test function.

Let $D={\bf C}\langle x, y, r, t, \Del_x, \Del_y, \Del_r, \Del_t\rangle$ 
be the 
ring of differential operators with polynomial coefficients.
For a given distribution $F$, we denote by $\Ann(F)$ the set of the operators in $D$
which annihilate $F$.
\begin{lem}
A left ideal $I$ in $D$ generated by following differential operators is
a subset of $\Ann(\mu_r)$.
\begin{equation}\label{annMU}
\begin{split}
  &      \Del_{x_{ij}}                    \, (1\leq i \leq j \leq n+1), 
  \quad  \Del_{y_i}                       \, (1\leq i \leq n+1), 
  \quad  t_1^2+\cdots+t_{n+1}^2-r^2, \\
  &      t_i\Del_{t_j}-t_j\Del_{t_i}  \, (1\leq i < j \leq n+1), 
       \quad r\Del_r+\sum_{i=1}^{n+1}t_i\Del_i+1
\end{split}
\end{equation}
\end{lem}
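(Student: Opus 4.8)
The plan is to verify that each of the listed operators annihilates the distribution $\mu_r$, since if every generator lies in $\Ann(\mu_r)$, then the left ideal $I$ they generate is contained in $\Ann(\mu_r)$ (as $\Ann(\mu_r)$ is itself a left ideal in $D$). So the task reduces to checking five families of operators, and I would treat them according to how they interact with the defining integral $\langle \mu_r, \varphi\rangle = \int_{S^n(r)}\varphi\,|dt|$. The derivatives $\Del_{x_{ij}}$ and $\Del_{y_i}$ are the easiest: the distribution $\mu_r$ does not depend on the variables $x$ and $y$ at all, so these partial derivatives kill $\mu_r$ trivially. The operator $t_1^2+\cdots+t_{n+1}^2-r^2$ is a multiplication operator, and since every test function is paired against $\mu_r$ only through its values on the support $S^n(r)$, where $\sum t_i^2 = r^2$ holds identically, multiplication by this polynomial annihilates $\mu_r$.

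For the remaining two families I would work from the definition of the action of a differential operator on a distribution, integrating by parts against a test function. For the rotation operators $t_i\Del_{t_j}-t_j\Del_{t_i}$, the key point is that these are the infinitesimal generators of the $SO(n+1)$ action on $\Real^{n+1}$, and both the sphere $S^n(r)$ and its Haar measure $|dt|$ are rotation-invariant. Concretely, I would compute $\langle (t_i\Del_{t_j}-t_j\Del_{t_i})\mu_r, \varphi\rangle$ by transposing the operator onto $\varphi$ (the formal adjoint of $t_i\Del_{t_j}$ being $-\Del_{t_j}t_i$), and then argue that the resulting surface integral vanishes because the vector field $t_i\partial_{t_j}-t_j\partial_{t_i}$ is tangent to the sphere and divergence-free, so that the divergence theorem on $S^n(r)$ leaves no boundary contribution.

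The operator $r\Del_r+\sum_{i=1}^{n+1}t_i\Del_{t_i}+1$ is the one I expect to be the main obstacle, because it mixes the radial parameter $r$ with the Euler (scaling) operator in the $t$-variables, and here the dependence of both the domain $S^n(r)$ and the measure $|dt|$ on $r$ must be handled carefully. The plan is to exploit the homogeneity of the setup under the simultaneous scaling $t\mapsto \lambda t$, $r\mapsto\lambda r$: under this scaling $S^n(r)$ maps to $S^n(\lambda r)$ and the $n$-dimensional Haar measure scales by $\lambda^n$. I would make this precise by a change of variables in the integral defining $\langle \mu_r,\varphi\rangle$, differentiate the resulting identity in $\lambda$ (or equivalently in $r$), and collect the terms to identify $r\partial_r$ acting on $\mu_r$ with the negative of the transpose of the Euler operator plus a correction coming from the $\lambda^n$ Jacobian factor; the constant $+1$ in the operator should emerge from matching the $n$ from the measure against the $n+1$ factors in $\sum t_i\Del_{t_i}$ after integration by parts. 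Once the radial scaling identity is established, verifying that this last generator annihilates $\mu_r$ is a matter of bookkeeping the constants, and then all five families lie in $\Ann(\mu_r)$, giving $I\subseteq\Ann(\mu_r)$ as claimed.
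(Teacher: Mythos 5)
Your proposal is correct, and there is in fact nothing in the paper to compare it against: the paper states this lemma without proof (the proof that follows it belongs to the subsequent proposition, that $I$ is holonomic), so your direct verification is exactly the kind of argument the author left to the reader. The reduction to checking generators is justified as you say, since $\Ann(\mu_r)$ is a left ideal; the checks for $\Del_{x_{ij}}$, $\Del_{y_i}$, for the multiplication operator $t_1^2+\cdots+t_{n+1}^2-r^2$ (which vanishes on the support $S^n(r)$), and for the rotation fields (formal adjoint equal to minus the same vector field, which is tangent to the sphere and measure-preserving) are all sound.

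One point in your treatment of the last operator deserves to be made explicit, because it touches an imprecision in the paper itself. Your scaling argument assumes the measure picks up a factor $\lambda^n$ under $t\mapsto\lambda t$, $r\mapsto\lambda r$; this holds for the \emph{unnormalized} Riemannian surface measure, whereas the paper's text calls $|dt|$ a probability measure. Your reading is the correct one: with the normalized measure the lemma would be false as stated (the constant in the last operator would have to be $n+1$ rather than $1$), and the surface-measure reading is the one consistent with the rest of the paper, e.g.\ with the operator $r\Del_r-2\sum_{i\leq j}x_{ij}\Del_{x_{ij}}-\sum_i y_i\Del_{y_i}-n$ annihilating $F$, which at $x=y=0$ forces $F(0,0,r)\propto r^n$. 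With that reading, your bookkeeping closes as claimed: the identity
\[
\langle \mu_{\lambda r},\varphi\rangle=\lambda^n\int_{S^n(r)}\varphi(\lambda t)\,|dt|
\]
differentiated at $\lambda=1$ gives
$r\Del_r\langle\mu_r,\varphi\rangle=n\langle\mu_r,\varphi\rangle+\langle\mu_r,\textstyle\sum_i t_i\Del_{t_i}\varphi\rangle$,
while integration by parts gives
$\langle\sum_i t_i\Del_{t_i}\mu_r,\varphi\rangle=-(n+1)\langle\mu_r,\varphi\rangle-\langle\mu_r,\sum_i t_i\Del_{t_i}\varphi\rangle$;
adding these, the Euler terms cancel and $n-(n+1)+1=0$, so $(r\Del_r+\sum_i t_i\Del_{t_i}+1)\mu_r=0$, exactly the cancellation you anticipated.
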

For computing the integration ideal, 
the following proposition is important. 
\begin{prop}
The left ideal $I$ in $D$ is a holonomic ideal.
\end{prop}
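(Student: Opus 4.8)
The plan is to show that the characteristic variety $V({\rm in}_{(0,1)}(I))$, i.e.\ the zero set in $\Comp[z,\xi]$ of the characteristic ideal, has Krull dimension exactly $d$, where $d$ is the number of variables of $D$ and $\xi=(\xi_x,\xi_y,\xi_r,\xi_t)$ are the symbols of the derivations. One inequality is essentially free: by the preceding Lemma $I\subseteq\Ann(\mu_r)$, and $\mu_r\neq 0$, so $1\notin I$ and $D/I\neq 0$; Bernstein's inequality then gives $\dim V({\rm in}_{(0,1)}(I))\geq d$. Hence it suffices to establish the reverse bound $\dim V({\rm in}_{(0,1)}(I))\leq d$.

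For the upper bound I would not compute ${\rm in}_{(0,1)}(I)$ itself, but only use the ideal $J$ generated by the principal symbols of the explicit generators in $(\ref{annMU})$. Since $J\subseteq {\rm in}_{(0,1)}(I)$ we have $V({\rm in}_{(0,1)}(I))\subseteq V(J)$, so it is enough to prove $\dim V(J)\leq d$. The symbols are $\xi_{x_{ij}}$, $\xi_{y_i}$, $\sum_i t_i^2 - r^2$, $t_i\xi_{t_j}-t_j\xi_{t_i}$ ($i<j$), and $r\xi_r+\sum_i t_i\xi_{t_i}$. The vanishing of the $\xi_{x_{ij}}$ and $\xi_{y_i}$ simply sets those covariables to zero while leaving $x$ and $y$ free; this contributes $N_x+N_y$ to the dimension (with $N_x,N_y$ the numbers of $x$- and $y$-variables) and reduces the problem to bounding the dimension of the ``essential'' variety $V_{\rm ess}$ in the $(r,t,\xi_r,\xi_t)$-space cut out by the remaining three families of symbols.

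The geometric heart of the computation is the observation that the relations $t_i\xi_{t_j}-t_j\xi_{t_i}=0$ for all $i<j$ force the vectors $t$ and $\xi_t$ to be proportional. I would therefore stratify $V_{\rm ess}$ according to whether $t=0$. On the open stratum $t\neq 0$ one may write $\xi_t=\lambda t$ for a single scalar $\lambda$; the sphere relation $\sum_i t_i^2=r^2$ then determines $r$ up to finitely many values, and the last relation $r\xi_r+\lambda r^2=0$ determines $\xi_r$, so this stratum is parametrized by $t$ (at most $n+1$ dimensions) and $\lambda$ (one dimension), giving dimension at most $(n+1)+1=n+2$. On the stratum $t=0$ the sphere relation forces $r=0$, and the covariables $\xi_r,\xi_t$ become unconstrained, again contributing $(n+1)+1=n+2$. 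Since $V_{\rm ess}$ is the union of these finitely many strata, $\dim V_{\rm ess}\leq n+2=N_t+1$, and therefore $\dim V(J)\leq N_x+N_y+N_t+1=d$, which together with Bernstein's inequality yields holonomicity.

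The step I expect to be the main obstacle is the careful treatment of the degenerate loci where $r=0$, namely the isotropic cone $\sum_i t_i^2=0$ with $t\neq 0$ and the locus $t=0$. There the relation $r\xi_r+\lambda r^2=0$ degenerates to $0=0$ and no longer pins down $\xi_r$, so the naive dimension count that works on $\{r\neq 0\}$ breaks down; one must verify separately that freeing $\xi_r$ on these positive-codimension loci does not push the dimension above $n+2$. Organizing the stratification so that each stratum is visibly of dimension at most $N_t+1$, rather than attempting a single global generic computation, is what makes the bound go through uniformly in $n$.
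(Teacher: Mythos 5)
Your proof is correct, but it takes a genuinely different route from the paper's. Both begin identically: Bernstein's inequality (the ``fundamental theorem of algebraic analysis'' cited in the paper) reduces everything to the upper bound $\dim \leq d$. From there the paper does \emph{not} argue directly on the variety of the generator symbols. Instead it first manufactures extra elements of $I$, namely $r^2\Del_{t_k}+t_kr\Del_r-t_k$, as explicit combinations of the generators $(\ref{annMU})$, adjoins their symbols $r^2\xi_{t_k}+t_kr\xi_r$ to the symbol ideal (while dropping the Euler symbol $r\xi_r+\sum_i t_i\xi_{t_i}$ that you rely on), and then degenerates to a graded reverse lexicographic initial ideal, so that the dimension bound becomes an irreducible decomposition of a monomial variety. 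Those extra operators are not a luxury in the paper's scheme: under the Gr\"obner degeneration the sphere relation survives only as the monomial $t_{n+1}^2$ and the Euler symbol only as its leading monomial, so the implication ``$t=0\Rightarrow r=0$'' is lost; the monomial ideal built from the generator symbols alone then has a component (all $t_i$ and all but one $\xi_{t_i}$ set to zero, with $r$ and $\xi_r$ free) of dimension $d+1$, and it is precisely the extra monomials $r^2\xi_{t_k}$ that cut it back down. Your stratified count avoids this issue entirely because you never discard the polynomial structure: on $\{t\neq 0,\ r\neq 0\}$ the proportionality $\xi_t=\lambda t$ together with the Euler symbol pins everything down ($n+2$ parameters); on the isotropic cone $\{t\neq0,\ r=0\}$ the freed $\xi_r$ is compensated by the codimension-one condition $\sum_i t_i^2=0$; and at $t=0$ the freed $\xi_t,\xi_r$ are compensated by $t=r=0$. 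So the ``main obstacle'' you flag does resolve exactly as you sketch, uniformly in $n$, and your argument shows that the symbols of the stated generators already suffice---no auxiliary operators needed. What each approach buys: the paper's is mechanical, in the spirit of Oaku's algorithm (monomial ideals, purely combinatorial decomposition), while yours is shorter and more geometric, at the price of the stratification bookkeeping.
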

This proposition may be well known, however we could not find a proof
in the literature.
Therefore, we present a proof here. 
\begin{proof}
By the fundamental theorem of algebraic analysis
(see, e.g., \cite[p30.Theorem1.4.5]{SST}), 
it is enough to show that the dimension of the characteristic ideal 
${\rm in}_{(0, e)}(I)$ is not more than the number of variables
$N:=n(n+1)/2+2n+1$.

We can find the operators
$
r^2\Del_{t_k}+t_kr\Del_r-t_k  \, (1 \leq k \leq n+1)
$
in $I$ as follows.
\begin{eqnarray*}
&&
t_{n+1}(t_{n+1}\Del_{t_{n+1}}+\cdots+t_1\Del_{t_1}+r\Del_r+1)
-\Del_{t_{n+1}}(t_{n+1}^2+\cdots+t_1^2-r^2)\\
&=&
-\sum_{i=1}^n t_i(t_i\Del_{t_{n+1}}-t_{n+1}\Del_{t_i})
+r^2\Del_{t_{n+1}}+t_{n+1}r\Del_r-t_{n+1}, \\
&&
t_k(t_{n+1}\Del_{t_{n+1}}+\cdots+t_1\Del_{t_1}+r\Del_r+1)
-t_{n+1}(t_k\Del_{t_{n+1}}-t_{n+1}\Del_{t_k})\\
&=&
-\sum_{i=1}^{k-1} t_i(t_i\Del_{t_k}-t_k\Del_{t_i})
+\sum_{i=k+1}^n t_i(t_k\Del_{t_i}-t_i\Del_{t_k})
+\Del_{t_k}(t_{n+1}^2+\cdots+t_1^2-r^2)\\
&\quad& +r^2\Del_{t_k}+t_kr\Del_r-t_k \quad (1 \leq k \leq n)
\end{eqnarray*}
Then, the characteristic ideal ${\rm in}_{(0, e)}(I)$ contains the polynomials
\begin{eqnarray*}
&&     \xi_{x_{ij}}                    \, (1\leq i \leq j \leq n+1), 
\quad  \xi_{y_i}                       \, (1\leq i \leq n+1), 
\quad  t_{n+1}^2+\cdots+t_1^2-r^2, \\
&&     t_i\xi_{t_j}-t_j\xi_{t_i}       \, (1\leq i < j \leq n+1), 
\quad  r^2\xi_{t_i}+t_ir\xi_r(1 \leq i \leq n+1).
\end{eqnarray*}
Let $I'$ be the ideal in the polynomial ring
$
\Comp [
x, y, r, t, \xi_x, \xi_y, \xi_r, \xi_t
]
$
generated by these polynomials.
Then, we have $I'\subset {\rm in}_{(0,e)}(I)$.
Since $\dim I' \geq \dim {\rm in}_{(0,e)}(I)$,
it is enough to show that $\dim I' \leq N$.

Consider the graded reverse lexicographic order 
satisfying
$$
\xi_{t_{n+1}} \succ \cdots \succ \xi_{t_1} 
\succ \xi_x \succ \xi_y \succ \xi_r \succ
t_{n+1} \succ \cdots \succ t_1 
\succ x \succ     y \succ     r.
$$
Since the degree of the Hilbert polynomial of an ideal in the polynomial ring
equals 
that of the initial ideal with respect to the graded order of the ideal
(see, e.g., \cite[p448, Proposition 4]{CLO}), 
the dimension of $I'$ is equal to that of the initial ideal $\LT_\prec(I')$
with respect to this order.
The initial ideal 
$
\LT_\prec(I')
$
contains the monomials
$
\xi_{x_{ij}}, \xi_{y_i}, 
t_i\xi_{t_j}, 
r^2\xi_{t_i}, 
t_{n+1}^2.
$
Let $I''$ be the ideal generated by these monomials.
Analogously, we can show that it suffices to prove that 
the dimension of $I''$ is not more than $N$.

Computing the irreducible decomposition of the algebraic variety defined by
$I''$ as 
\begin{eqnarray*}
&&V(\xi_{x_{kl}}, \xi_{y_k}, t_i\xi_{t_j}, r^2\xi_{t_k}, t_{n+1}^2;
  1\leq k\leq l\leq n+1, 1\leq i<j\leq n+1
  )\\
&=&
V(\xi_{x_{ij}}, \xi_{y_i}, t_{n+1};1\leq i\leq j\leq n+1)
  \cap \bigcap_{1\leq i< j\leq n+1}V(t_i\xi_{t_j})
  \cap \bigcap_{i=1}^{n+1}V(r^2\xi_{t_i})\\
&=&
  V(\xi_{x_{ij}}, \xi_{y_i}, t_{n+1};1\leq i\leq j\leq n+1)
  \cap
  \bigcup_{i=1}^{n+1}V(t_1, \dots, t_{i-1}, \xi_{t_{i+1}}, \dots, \xi_{t_{n+1}})\\
  &\quad&\cap 
  \left(
    V(r) \cup V(\xi_{t_1}, \dots, \xi_{t_{n+1}})
  \right)\\
&=&
  \left(\bigcup_{k=1}^{n+1}
    V(\xi_{x_{ij}}, \xi_{y_i}, t_{n+1}, 
	  t_1, \dots, t_{k-1}, \xi_{t_{k+1}}, \dots, \xi_{t_{n+1}}
    )
  \right)\\
  &\quad&\cap 
  \left(
    V(r) \cup V(\xi_{t_1}, \dots, \xi_{t_{n+1}})
  \right)\\
&=&
  \left(\bigcup_{i=1}^{n+1}
    V(\xi_{x_{kl}}, \xi_{y_l}, r, t_{n+1}, 
	  t_1, \dots, t_{i-1}, \xi_{t_{i+1}}, \dots, \xi_{t_{n+1}};
      1\leq k\leq l\leq n+1
    )
  \right)\\
  &\quad&\cup
  \left(\bigcup_{i=1}^{n+1}
    V(\xi_{x_{kl}}, \xi_{y_k}, t_{n+1}, 
	  t_1, \dots, t_{i-1}, \xi_{t_1}, \dots, \xi_{t_{n+1}};
      1\leq k\leq l\leq n+1
    )
  \right),
\end{eqnarray*}
we conclude that the dimension of $I''$ is exactly $N$.
\qedhere
\end{proof}
\section{Holonomic ideal annihilating $\exp(g)\mu_r$}\label{section3}
Let $g(x, y, t)$ be the polynomial
$
\sum_{1\leq i \leq j \leq n+1}x_{ij}t_it_j 
+\sum_{i=1}^{n+1}y_it_i.
$
We can get a holonomic ideal annihilating 
the distribution $\exp(g(x, y, t))\mu_r$
by the following lemma.
\begin{lem}
Consider the ring of differential operators with polynomial coefficients 
$\Comp\langle x_1, \dots, x_n, \Del_1, \dots, \Del_n \rangle$.
Let $u$ be a distribution and 
suppose that $I\subset \Ann(u)$ is a holonomic ideal. 
Let $f$ be a polynomial and $f_i :=\Del f/\Del x_i$.
Then, the left ideal $J$ generated by 
$$
\left\{
P(x_1, \dots, x_n;\Del_{x_1}-f_1, \dots, \Del_{x_n}-f_n)
\vert
P(x_1, \dots, x_n;\Del_{x_1}, \dots, \Del_{x_n})\in I
\right\}
$$
is a holonomic ideal such that $J\subset \Ann(e^fu)$
\end{lem}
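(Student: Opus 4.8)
The plan is to recognize the substitution $\Del_{x_i} \mapsto \Del_{x_i} - f_i$ as conjugation by $e^f$. I would define the map $\phi\colon D \to D$ by $\phi(P) = e^f P e^{-f}$, acting on operators on distributions; since conjugation by an invertible element is automatically a ring automorphism of the algebra of operators, I only need that $\phi$ and $\phi^{-1}$ preserve $D$. On generators, multiplication by $x_i$ commutes with $e^{\pm f}$, so $\phi(x_i) = x_i$, while the Leibniz rule $\Del_{x_i}(e^{-f}\varphi) = e^{-f}(\Del_{x_i} - f_i)\varphi$ gives $\phi(\Del_{x_i}) = \Del_{x_i} - f_i$; likewise $\phi^{-1}(\Del_{x_i}) = \Del_{x_i} + f_i$. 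As the $f_i$ are polynomials these all lie in $D$, so $\phi$ restricts to a ring automorphism of $D$, and by construction it sends $P(x;\Del)$ to $P(x;\Del - f_1, \dots, \Del - f_n)$. Hence $J = \phi(I)$ is precisely the ideal in the statement, and being the image of the left ideal $I$ under an automorphism, it is a left ideal of $D$.

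For the inclusion $J \subset \Ann(e^f u)$, I would work at the level of operators on distributions. Since $f$ is a polynomial, $e^{\pm f}\in C^\infty$ and $e^f u$ is a well-defined distribution. For $P \in I \subset \Ann(u)$ the operator identity $\phi(P) = e^f P e^{-f}$ gives
$$
\phi(P)(e^f u) = e^f P e^{-f}(e^f u) = e^f (Pu) = 0,
$$
so each generator of $J$ annihilates $e^f u$ and therefore $J \subset \Ann(e^f u)$.

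The substantive step is holonomicity. The key point is that $\phi$ is a \emph{filtered} automorphism inducing the identity on the associated graded ring: with respect to the order filtration $F_\bullet D$, one has $\phi(F_m D) = F_m D$ because $\phi(\Del_{x_i}) = \Del_{x_i} - f_i$ has order $\le 1$ and each $f_i$ has order $0$, and for the same reason $\phi(P) - P \in F_{m-1}D$ whenever $P \in F_m D$. Thus $\phi$ fixes ${\rm gr}\,D$ pointwise and preserves principal symbols. I would then conclude that $\phi$ carries $I \cap F_m D$ bijectively onto $J \cap F_m D$ without changing symbols, so the symbol ideals coincide:
$$
{\rm in}_{(0,e)}(J) = {\rm in}_{(0,e)}(I).
$$
In particular the two characteristic ideals have the same Krull dimension; since $I$ is holonomic this dimension equals the number $n$ of variables, and hence $J$ is holonomic as well.

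The main obstacle I anticipate is exactly this last equality of characteristic ideals: it is tempting but insufficient to observe only that each generator of $J$ has the same principal symbol as the corresponding generator of $I$, because the initial ideal of an ideal is not generated by the initial forms of an arbitrary generating set. Phrasing the argument through the filtered isomorphism $\phi$ and the induced identity on ${\rm gr}\,D$ is what upgrades the symbol-wise agreement to the full equality ${\rm in}_{(0,e)}(J) = {\rm in}_{(0,e)}(I)$, so I would take care to present it at that level rather than generator by generator.
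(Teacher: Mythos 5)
Your proof is correct, but it cannot be "the same as the paper's" for a simple reason: the paper does not prove this lemma at all — it defers entirely to the reference \cite{OST}. So what you have written is a self-contained argument for a fact the paper treats as a black box. Your route — realizing the substitution $\Del_{x_i}\mapsto \Del_{x_i}-f_i$ as conjugation $P\mapsto e^fPe^{-f}$, checking that this restricts to an automorphism $\phi$ of $D$ because the $f_i$ are polynomials, deducing $J=\phi(I)\subset\Ann(e^fu)$ from $\phi(P)(e^fu)=e^f(Pu)=0$, and then proving holonomicity by noting that $\phi$ preserves the order filtration and induces the identity on ${\rm gr}\,D$, so that ${\rm in}_{(0,e)}(J)={\rm in}_{(0,e)}(I)$ — is the standard argument (essentially what the cited reference contains). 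The holonomicity step is handled correctly and is exactly where care is needed: as you observe, agreement of principal symbols of a generating set would not suffice, since the characteristic ideal is generated by the symbols of \emph{all} elements of the ideal; your filtered-automorphism formulation upgrades this to equality of the full characteristic ideals, hence of their Krull dimensions, which is precisely the paper's definition of holonomic. One could quibble that the identity $[\Del_i-f_i,\Del_j-f_j]=0$ (equality of mixed partials of $f$) deserves a line, since it is what makes the substitution notation $P(x;\Del-f)$ well defined independently of how $P$ is written, but this is implicit in your conjugation framework. What your approach buys is that the paper becomes self-contained on this point; what the paper's citation buys is brevity.
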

For a proof of this lemma, we refer to \cite{OST}.
It follows from this lemma that 
the left ideal $J$ in $D$ generated by the following differential operators 
is a holonomic ideal and included in $\Ann(\exp(g)\mu_r)$.
\begin{equation}\label{genJ0}
  \begin{split}
	&\Del_{x_{ij}} -t_it_j  \quad (1\leq i \leq j \leq n+1), \\
	&\Del_{y_i} - t_i       \quad (1\leq i \leq n+1), \\
	&t_i(\Del_{t_j}-\sum_{k=1}^{n+1}x_{jk}t_k-x_{jj}t_j-y_j)
	-t_j(\Del_{t_i}-\sum_{k=1}^{n+1}x_{ik}t_k-x_{ii}t_i-y_i)\\
    & \quad (1\leq i < j \leq n+1), \\
	&t_1^2+\cdots +t_{n+1}^2-r^2, \\
	&r\Del_r
		+1
		+\sum_{i=1}^{n+1}t_i
		\left(
        \Del_{t_i}-\sum_{k=1}^{n+1}x_{ik}t_k-x_{ii}t_i-y_i
		\right)
  \end{split}
\end{equation}
In fact, we will show that 
the ideal $J$ is generated by the differential operators
\begin{equation}\label{GenJ}
\begin{split}
  &t_i - \Del_{y_i}                  \, (1\leq i \leq n+1), \quad
  \Del_{x_{ij}}-\Del_{y_i}\Del_{y_j}  \, (1\leq i \leq j \leq n+1), \\
  &\sum_{i=1}^{n+1}\Del_{x_{ii}}-r^2,                                     \\
  &x_{ij}\Del_{x_{ii}}+2(x_{jj}-x_{ii})\Del _{x_{ij}}
    -x_{ij}\Del_{x_{jj}}                                                   \\
	&\quad	+\sum _{k \neq i, j}
	\left( x_{kj}\Del_{x_{ik}}-x_{ik}\Del_{x_{jk}}\right)
	+y_j\Del_{y_i} -y_i\Del _{y_j}
	+\Del_{t_i}\Del_{y_j}-\Del_{t_j}\Del_{y_i}                             \\
	&\quad (1\leq i < j \leq n+1, x_{k\ell}=x_{\ell k}),                  \\ 
  &r\Del_r -2\sum _{i\leq j}x_{ij}\Del_{x_{ij}}
    -\sum_{i=1}^{n+1} y_i\Del _{y_i} -n 
    +\sum_{i=1}^{n+1} \Del_{t_i}\Del_{y_i}
\end{split}
\end{equation}
To prove this statement, we prepare the following lemma.
\begin{lem}
\begin{equation}\label{mod1}
t^\alpha \equiv \Del_y^\alpha  
\quad \bmod \, D\{t_i-\Del_{y_i};1\leq i \leq n+1\}
\end{equation}
\end{lem}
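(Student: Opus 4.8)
The plan is to exploit the fact that, although $D$ is noncommutative, the particular elements entering this congruence generate a \emph{commutative} subring of $D$: the $t_i$ commute among themselves, the $\Del_{y_j}$ commute among themselves, and, most importantly, $[t_i,\Del_{y_j}]=0$ for all $i,j$ because $t$ and $y$ are independent families of variables. Once this is observed, the assertion becomes the familiar statement that a difference of two monomials which coincide under the substitution $t_i\mapsto\Del_{y_i}$ lies in the ideal generated by the differences $t_i-\Del_{y_i}$.

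Concretely, I would argue by induction on the total degree $|\alpha|=\sum_i\alpha_i$. The cases $|\alpha|\le 1$ are immediate (for $|\alpha|=1$ the element $t^\alpha-\Del_y^\alpha$ is itself a generator). For the inductive step, choose an index $k$ with $\alpha_k\ge 1$ and write
$$
t^\alpha-\Del_y^\alpha
=(t_k-\Del_{y_k})\,t^{\alpha-e_k}
+\Del_{y_k}\bigl(t^{\alpha-e_k}-\Del_y^{\alpha-e_k}\bigr),
$$
where $e_k$ denotes the $k$-th standard basis multi-index. The first summand is a multiple of the generator $t_k-\Del_{y_k}$; using $[t_i,\Del_{y_k}]=0$ it equals $t^{\alpha-e_k}(t_k-\Del_{y_k})$, hence it lies in the left ideal $D\{t_i-\Del_{y_i}\}$. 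The second summand is $\Del_{y_k}$ applied on the left to $t^{\alpha-e_k}-\Del_y^{\alpha-e_k}$, which lies in the left ideal by the induction hypothesis; since a left ideal is closed under left multiplication, this term stays in it as well. Adding the two pieces yields the claim.

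If a closed form is preferred over induction, I would instead telescope one coordinate at a time,
$$
t^\alpha-\Del_y^\alpha
=\sum_{k=1}^{n+1}
\Del_{y_1}^{\alpha_1}\cdots\Del_{y_{k-1}}^{\alpha_{k-1}}
\bigl(t_k^{\alpha_k}-\Del_{y_k}^{\alpha_k}\bigr)
t_{k+1}^{\alpha_{k+1}}\cdots t_{n+1}^{\alpha_{n+1}},
$$
and then factor each one-variable difference as $t_k^{\alpha_k}-\Del_{y_k}^{\alpha_k}=(t_k-\Del_{y_k})\sum_{j=0}^{\alpha_k-1}t_k^{\alpha_k-1-j}\Del_{y_k}^{j}$, which is legitimate precisely because $t_k$ and $\Del_{y_k}$ commute. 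Each term of the sum is then visibly a multiple of some generator $t_k-\Del_{y_k}$.

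The only point requiring genuine care — and hence the main, though mild, obstacle — is the noncommutative bookkeeping: I must ensure that when a generator $t_i-\Del_{y_i}$ is split off it ends up on the correct (right) side, so that what remains is truly an element of the \emph{left} ideal $D\{t_i-\Del_{y_i}\}$, and that the surviving factors can be commuted past it without producing commutator corrections. Every such commutation is an instance of $[t_i,\Del_{y_j}]=0$, $[t_i,t_j]=0$, or $[\Del_{y_i},\Del_{y_j}]=0$, so no lower-order terms ever appear; this is exactly what makes the computation collapse to the purely commutative identity.
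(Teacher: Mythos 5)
Your proof is correct and takes essentially the same route as the paper: an induction on the degree of $t^\alpha$, peeling off one coordinate at a time and using the commutation $[t_i,\Del_{y_j}]=0$ to place the generator $t_k-\Del_{y_k}$ as a right-hand factor, so that the split-off piece is visibly a left multiple of a generator and the remainder is handled by the induction hypothesis together with closure of left ideals under left multiplication. Your telescoping closed-form variant is a fine alternative, but the inductive core coincides with the paper's argument.
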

\begin{proof}
When $\alpha = e_i$, the equation $(\ref{mod1})$ obviously holds.
Let us assume that $(\ref{mod1})$ holds for the case of $\alpha-e_i$. 
Then, we have
\begin{eqnarray*}
t^\alpha 
&=& t_it^{(\alpha-e_i)}\\
&\equiv& t_i\Del_y^{(\alpha-e_i)}
         \quad \bmod \, D\{t_i-\Del_{y_i};1\leq i \leq n+1\}\\
&=& \Del_y^{(\alpha-e_i)}t_i
 =  \Del_y^{(\alpha-e_i)}(t_i-\Del_{y_i})
         +\Del_y^{(\alpha-e_i)}\Del_{y_i}\\
&\equiv& \Del_y^{(\alpha-e_i)}\Del_{y_i}
		 \quad \bmod \, D\{t_i-\Del_{y_i};1\leq i \leq n+1\}\\
&=& \Del_y^\alpha
\end{eqnarray*}
Hence, $(\ref{mod1})$ holds for $\alpha$.
Therefore, the equation $(\ref{mod1})$ holds for any $\alpha$.
\end{proof}

Finally, we prove the following lemma.
\begin{lem}
The differential operators $(\ref{GenJ})$ generates $J$.
\end{lem}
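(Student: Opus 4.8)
The plan is to prove that the systems $(\ref{genJ0})$ and $(\ref{GenJ})$ generate the same left ideal $J$ by comparing them modulo the common left ideal $L := D\{t_i - \Del_{y_i}; 1\le i\le n+1\}$. Both systems contain the operators $t_i - \Del_{y_i}$ --- they appear as $\Del_{y_i} - t_i$ in $(\ref{genJ0})$ and as $t_i - \Del_{y_i}$ in $(\ref{GenJ})$ --- so $L$ is contained in either ideal, and by the lemma establishing $(\ref{mod1})$ a $t$-monomial $t^\alpha$ occurring on the right may be replaced by $\Del_y^\alpha$ modulo $L$. First I would record the two dictionaries this yields. Modulo the ideal generated by $(\ref{genJ0})$ we have $t_i\equiv\Del_{y_i}$ and, from the generator $\Del_{x_{ij}}-t_it_j$, also $t_it_j\equiv\Del_{x_{ij}}$. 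Modulo the ideal generated by $(\ref{GenJ})$ we likewise have $t_i\equiv\Del_{y_i}$, whence $t_it_j\equiv\Del_{y_i}\Del_{y_j}$, and then $\Del_{y_i}\Del_{y_j}\equiv\Del_{x_{ij}}$ by the generator $\Del_{x_{ij}}-\Del_{y_i}\Del_{y_j}$. Thus the chain $\Del_{x_{ij}}\leftrightarrow t_it_j\leftrightarrow\Del_{y_i}\Del_{y_j}$ is available in both ideals, and the task reduces to checking that each generator of one system reduces, under these substitutions, to a scalar multiple of a generator of the other.

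For the first-order generators this is immediate: $\Del_{x_{ij}}-\Del_{y_i}\Del_{y_j}$ and $\Del_{x_{ij}}-t_it_j$ differ by $t_it_j-\Del_{y_i}\Del_{y_j}\in L$; the operator $\sum_i\Del_{x_{ii}}-r^2$ differs from the sphere relation $t_1^2+\cdots+t_{n+1}^2-r^2$ by $\sum_i(\Del_{x_{ii}}-t_i^2)$; and $t_i-\Del_{y_i}$ is shared. Hence all of these lie in both ideals.

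The substance is in the rotation operators (third family) and the Euler operator (last line). Writing $g_k:=\Del g/\Del t_k$, I would expand the $(\ref{genJ0})$ rotation operator $t_i(\Del_{t_j}-g_j)-t_j(\Del_{t_i}-g_i)$ and reduce each part on its rightmost factor. Since $i\ne j$, the angular part $t_i\Del_{t_j}-t_j\Del_{t_i}$ reduces with no commutator term to $-(\Del_{t_i}\Del_{y_j}-\Del_{t_j}\Del_{y_i})$; the $y$-linear part $-t_iy_j+t_jy_i$ reduces to $-(y_j\Del_{y_i}-y_i\Del_{y_j})$; and the $x$-quadratic part, after substituting $t_at_b\equiv\Del_{x_{ab}}$ and splitting off the $k=i$ and $k=j$ summands, collapses to $-1$ times $x_{ij}\Del_{x_{ii}}+2(x_{jj}-x_{ii})\Del_{x_{ij}}-x_{ij}\Del_{x_{jj}}+\sum_{k\ne i,j}(x_{kj}\Del_{x_{ik}}-x_{ik}\Del_{x_{jk}})$. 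Altogether the reduced $(\ref{genJ0})$ rotation operator is exactly $-1$ times the $(\ref{GenJ})$ rotation operator. For the Euler operator I would expand $r\Del_r+1+\sum_i t_i(\Del_{t_i}-g_i)$; here the diagonal terms force the commutation $t_i\Del_{t_i}=\Del_{t_i}t_i-1$ before the substitution, producing $\sum_i\Del_{t_i}\Del_{y_i}-(n+1)$, while $\sum_i t_ig_i$ reduces to $2\sum_{i\le j}x_{ij}\Del_{x_{ij}}+\sum_i y_i\Del_{y_i}$; combining with the leading $+1$ the constant becomes $-n$ and one obtains precisely the $(\ref{GenJ})$ Euler operator.

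Since every identity above holds modulo $L$ and the relations $\Del_{x_{ij}}-t_it_j$ and $\Del_{x_{ij}}-\Del_{y_i}\Del_{y_j}$ belong to both ideals, the same computations give both inclusions at once: each generator of $(\ref{GenJ})$ reduces to $0$ modulo the ideal generated by $(\ref{genJ0})$, and conversely, so the two ideals coincide. I expect the main obstacle to be the bookkeeping for the $x$-quadratic part of the rotation operator --- collecting the coefficients of $\Del_{x_{ii}}$, $\Del_{x_{ij}}$, $\Del_{x_{jj}}$ and of the remaining $\Del_{x_{ik}},\Del_{x_{jk}}$ after the splitting, while respecting the symmetry $x_{ik}=x_{ki}$ --- together with the care needed because reduction modulo a left ideal must be performed on factors standing to the right, so that each $t_i\Del_{t_i}$ must be commuted to $\Del_{t_i}t_i-1$ before $t_i$ is replaced by $\Del_{y_i}$.
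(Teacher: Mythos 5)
Your proposal is correct and follows essentially the same route as the paper: both arguments reduce each generator of $(\ref{genJ0})$ modulo the left ideal generated by $t_i-\Del_{y_i}$ (and the relations $\Del_{x_{ij}}-t_it_j\equiv\Del_{x_{ij}}-\Del_{y_i}\Del_{y_j}$, which lie in both ideals) and observe that the rotation operators collapse to $-1$ times and the Euler operator to exactly the corresponding generator of $(\ref{GenJ})$, the symmetry of the congruences yielding both inclusions. Your explicit attention to right-factor reduction and the commutator $t_i\Del_{t_i}=\Del_{t_i}t_i-1$ matches the paper's computation precisely.
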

\begin{proof}
Let $K$ be the left ideal generated by $(\ref{GenJ})$.
First, let us show $J\subset K$.
The equation  
\begin{equation}\label{mod2}
\Del_{x_{ij}}-t_it_j \equiv \Del_{x_{ij}}-\Del_{y_i}\Del_{y_j}
		 \quad \bmod \, D\{t_i-\Del_{y_i};1\leq i \leq n+1\}
\end{equation}
gives the inclusion
$
\Del_{x_{ij}}-t_it_j \in K.
$

The inclusion
$
\Del_{y_i} - t_i \in K
$
is obvious.
The inclusion
$
t_i(\Del_{t_j}-\sum_{k=1}^{n+1}x_{jk}t_k-x_{jj}t_j-y_j)
-t_j(\Del_{t_i}-\sum_{k=1}^{n+1}x_{ik}t_k-x_{ii}t_i-y_i)
\in K
$
follows from 
\begin{eqnarray*}
  & &t_i(\Del_{t_j}-\sum_{k=1}^{n+1}x_{jk}t_k-x_{jj}t_j-y_j)
  -t_j(\Del_{t_i}-\sum_{k=1}^{n+1}x_{ik}t_k-x_{ii}t_i-y_i)\\
  &=&
  \sum_{k=1}^{n+1}x_{ik}t_kt_j-\sum_{k=1}^{n+1}x_{jk}t_kt_i
  -x_{jj}t_it_j+x_{ii}t_it_j
  -y_jt_i+y_it_j
  +t_i\Del_{t_j}-t_j\Del_{t_i}\\
  &=&
  \sum_{k=1}^{n+1}(x_{ik}t_j-x_{jk}t_i)t_k
  +(x_{ii}-x_{jj})t_it_j
  +y_it_j-y_jt_i
  +t_i\Del_{t_j}-t_j\Del_{t_i}  \\
  &\equiv&
  \sum_{k=1}^{n+1}(x_{ik}\Del_{y_j}-x_{jk}\Del_{y_i})\Del_{y_k}  
  +(x_{ii}-x_{jj})\Del_{y_i}\Del_{y_j}\\
  &\quad& +y_i\Del_{y_j}-y_j\Del_{y_i}
  +\Del_{y_i}\Del_{t_j}-\Del_{y_j}\Del_{t_i}
  \quad \bmod \, D\{t_i-\Del_{y_i};1\leq i \leq n+1\}\\
  &\equiv&
  \sum_{k=1}^{n+1}(x_{ik}\Del_{x_{jk}}-x_{jk}\Del_{x_{ik}})
  +(x_{ii}-x_{jj})\Del_{x_{ij}}\\
  &\quad& +y_i\Del_{y_j}-y_j\Del_{y_i}
  +\Del_{y_i}\Del_{t_j}-\Del_{y_j}\Del_{t_i}
  \quad \bmod\, D\{\Del_{x_{ij}}-\Del_{y_i}\Del_{y_j};1\leq i \leq j\leq n+1\}\\
  &=&
  x_{ij}\Del_{x_{jj}}
  +\sum_{k\neq i, j}(x_{ik}\Del_{x_{jk}}-x_{jk}\Del_{x_{ik}})
  -x_{ij}\Del_{x_{ii}}
  +2(x_{ii}-x_{jj})\Del_{x_{ij}}\\
  &\quad& +y_i\Del_{y_j}-y_j\Del_{y_i}
  +\Del_{y_i}\Del_{t_j}-\Del_{y_j}\Del_{t_i}.
\end{eqnarray*}
Since
\begin{eqnarray*}
&&
  t_1^2+\cdots +t_{n+1}^2-r^2\\
&\equiv&
  \Del_{y_1}^2+\cdots +\Del{y_{n+1}}^2-r^2
  \quad \bmod \, D\{t_i-\Del_{y_i};1\leq i \leq n+1\}  \\
&=&
  \sum_{i=1}^{n+1}\Del_{x_{ii}}-r^2
  \quad \bmod\, D\{\Del_{x_{ij}}-\Del_{y_i}\Del_{y_j};1\leq i \leq j\leq n+1\}, \\
\end{eqnarray*}
we have
$
\sum_{i=1}^{n+1}\Del_{x_{ii}}-r^2 \in K.
$

The inclusion
$
  r\Del_r
  +1
  +\sum_{i=1}^{n+1}t_i
     \left(
          \Del_{t_i}-\sum_{k=1}^{n+1}x_{ik}t_k-x_{ii}t_i-y_i
     \right)
\in K
$
follows from 
\begin{eqnarray*}
&&
  r\Del_r
  +1
  +\sum_{i=1}^{n+1}t_i
     \left(
          \Del_{t_i}-\sum_{k=1}^{n+1}x_{ik}t_k-x_{ii}t_i-y_i
     \right)\\
&=&
	 r\Del_r
	 +1
	 +\sum_{i=1}^{n+1}t_i\Del_{t_i}
	 -\sum_{i=1}^{n+1}\sum_{k=1}^{n+1}x_{ik}t_it_k
	 -\sum_{i=1}^{n+1}x_{ii}t_i^2
     -\sum_{i=1}^{n+1}y_it_i\\
&=&
	 r\Del_r
	 -n
	 +\sum_{i=1}^{n+1}\Del_{t_i}t_i
	 -\sum_{i=1}^{n+1}\sum_{k=1}^{n+1}x_{ik}t_it_k
	 -\sum_{i=1}^{n+1}x_{ii}t_i^2
     -\sum_{i=1}^{n+1}y_it_i\\
&\equiv&
	 r\Del_r
	 -n
	 +\sum_{i=1}^{n+1}\Del_{t_i}\Del_{y_i}
	 -\sum_{i=1}^{n+1}\sum_{k=1}^{n+1}x_{ik}\Del_{x_{ik}}
	 -\sum_{i=1}^{n+1}x_{ii}\Del_{x_{ii}}
     -\sum_{i=1}^{n+1}y_i\Del_{y_i}\\
	 &&  \quad \bmod\, 
	 D\{t_i-\Del_{y_i}, \Del_{x_{ij}}-\Del_{y_i}\Del_{y_j};
	 1\leq i \leq j\leq n+1\}\\
&=&
	 r\Del_r
	 -n
	 +\sum_{i=1}^{n+1}\Del_{t_i}\Del_{y_i}
	 -2\sum_{i\leq j}x_{ij}\Del_{x_{ij}}
     -\sum_{i=1}^{n+1}y_i\Del_{y_i}.
\end{eqnarray*}
Therefore, we have $J\subset K$.

Next, let us show the opposite inclusion $K\subset J$.
The inclusion
$
t_i - \Del_{y_i} \in J
$
is obvious.
The inclusion
$
\Del_{x_{ij}}-\Del_{y_i}\Del_{y_j} \in J
$
follows from equation $(\ref{mod2})$.
Other generators of $K$ are also in $J$ because of the above equivalence
relation.
\qedhere
\end{proof}
\section{The Fisher--Bingham Integral}\label{section4}
Let $D'$ be the ring of differential operators with polynomial coefficients 
$\Comp\langle x, y, r, \Del_x, \Del_y, \Del_r\rangle$.
The left ideal 
$
J'
:=
D'\cap(J+\{\Del_{t_1}, \dots, \Del_{t_{n+1}}\}\cdot D)
$ in $D'$
is the integration ideal of $J$.
The Fisher--Bingham integral $(\ref{FB})$ can be written as
$$
F(x, y, r) 
=
\langle e^{g(x, y, t)}\mu_r, 1\rangle
=
\int_{\Real^{n+1}} \exp(g(x, y, t))\mu_r dt.
$$
Hence, the operators in $J'$ annihilate $F(x, y, r)$.
It is known that the integration ideal of a holonomic ideal is also
a holonomic ideal (see, e.g.,  \cite[2, chapter1]{Bjork}).
Therefore, if we obtain a set of generators of $J'$, then this set generates
a holonomic ideal.
In this section, we compute a set of generators of $J'$.
As the first step, we prove the following lemma.
\begin{lem}
Let $P$ be an arbitrary differential operator in $(\ref{GenJ})$; then 
we have
$$
t^\alpha P \equiv \Del_y^\alpha P 
\quad \bmod \, D\{t_i-\Del_{y_i};1\leq i \leq n+1\}.
$$
\end{lem}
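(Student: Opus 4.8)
The plan is to reduce the statement to the previous lemma $(\ref{mod1})$ together with a single commutation fact. First I would rewrite
$$
t^\alpha P - \Del_y^\alpha P = (t^\alpha - \Del_y^\alpha)P,
$$
so that it suffices to show $(t^\alpha - \Del_y^\alpha)P \in D\{t_i - \Del_{y_i}; 1\le i\le n+1\}$. By $(\ref{mod1})$ we may write $t^\alpha - \Del_y^\alpha = \sum_{i=1}^{n+1} Q_i(t_i - \Del_{y_i})$ for suitable $Q_i\in D$, whence
$$
(t^\alpha - \Del_y^\alpha)P = \sum_{i=1}^{n+1} Q_i(t_i - \Del_{y_i})P.
$$
Since $D\{t_i - \Del_{y_i}\}$ is only a \emph{left} ideal, this expression does not automatically belong to it: the factor $P$ sits on the wrong side of the generators. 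The entire argument therefore hinges on being able to move $P$ past each $t_i - \Del_{y_i}$.

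The key step is thus to verify the commutation relation
$$
(t_i - \Del_{y_i})P = P(t_i - \Del_{y_i}) \qquad (1\le i\le n+1)
$$
for every generator $P$ listed in $(\ref{GenJ})$; once this is granted, each summand becomes $Q_i P(t_i - \Del_{y_i})$, which lies in $D\{t_i-\Del_{y_i}\}$ by definition of a left ideal, and the proof is finished. I expect this commutation check to be the main (indeed the only real) obstacle. For the generators $t_i-\Del_{y_i}$, $\Del_{x_{ij}}-\Del_{y_i}\Del_{y_j}$ and $\sum_i\Del_{x_{ii}}-r^2$ it is immediate, since they involve neither any $\Del_{t_k}$ nor any variable $y_k$, while $t_m-\Del_{y_m}$ commutes with every letter other than $\Del_{t_m}$ and $y_m$.

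The substantive cases are the rotation operators (indexed by $i<j$) and the final Euler-type operator, which do contain both $\Del_t\Del_y$ terms and $y\Del_y$ terms. Here I would compute $[\,t_m-\Del_{y_m}, P\,] = [t_m,P] - [\Del_{y_m},P]$ directly, using $[t_m,\Del_{t_m}] = -1$ and $[\Del_{y_m},y_m]=1$. The point of the construction is that the contribution of $[t_m,\,\cdot\,]$ coming from the terms $\Del_{t_i}\Del_{y_j}-\Del_{t_j}\Del_{y_i}$ (resp. $\sum_k\Del_{t_k}\Del_{y_k}$) is exactly cancelled by the contribution of $[\Del_{y_m},\,\cdot\,]$ coming from the terms $y_j\Del_{y_i}-y_i\Del_{y_j}$ (resp. $-\sum_k y_k\Del_{y_k}$); in each case one finds $[t_m,P] = [\Del_{y_m},P]$, so the bracket vanishes. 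Carrying out this short cancellation for each index (distinguishing $m\in\{i,j\}$ from $m\notin\{i,j\}$ for the rotation operator indexed by $i<j$) establishes the commutation relation for all generators, and with it the lemma. An entirely equivalent route, should one prefer not to invoke $(\ref{mod1})$ in the factored form above, is to imitate the inductive proof of $(\ref{mod1})$: induct on $|\alpha|$, peel off one $t_i$ at a time, and use the same commutation relation to replace $t_iP$ by $\Del_{y_i}P$ modulo the ideal at each stage.
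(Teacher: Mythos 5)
Your proposal is correct, but it organizes the argument differently from the paper. The paper proves the lemma by a direct congruence chain for each of the two nontrivial generators $P$ (the rotation operators and the Euler-type operator): it commutes $t^\alpha$ to the right of $P$, picking up correction terms such as $-\alpha_i\Del_{y_j}t^{(\alpha-e_i)}$ from the $\Del_t\Del_y$ parts, applies the previous lemma $(\ref{mod1})$ to replace powers of $t$ by powers of $\Del_y$, and then commutes $\Del_y^\alpha$ back to the left of $P$, at which point the corrections generated by the $y\Del_y$ parts cancel the earlier ones. You instead factor $t^\alpha P-\Del_y^\alpha P=(t^\alpha-\Del_y^\alpha)P$, invoke $(\ref{mod1})$ once to write $t^\alpha-\Del_y^\alpha=\sum_i Q_i(t_i-\Del_{y_i})$, and reduce everything to the exact commutation relation $[t_i-\Del_{y_i},P]=0$ for every generator $P$ of $(\ref{GenJ})$ --- which indeed holds: for the rotation operators the brackets $[t_m,\,\Del_{t_i}\Del_{y_j}-\Del_{t_j}\Del_{y_i}]$ and $[\Del_{y_m},\,y_j\Del_{y_i}-y_i\Del_{y_j}]$ agree for every $m$, and for the Euler operator $[t_m,\sum_k\Del_{t_k}\Del_{y_k}]=-\Del_{y_m}=[\Del_{y_m},-\sum_k y_k\Del_{y_k}]$. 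Both proofs rest on the same cancellation between the $\Del_t\Del_y$ and $y\Del_y$ terms, but yours isolates it as a single, multi-index-free identity: the generators of $(\ref{GenJ})$ commute with the generators $t_i-\Del_{y_i}$ of the left ideal, after which the statement for general $\alpha$ is purely formal and the bookkeeping of correction terms disappears. This is cleaner and makes the structural reason for the lemma visible; the paper's route needs no separate commutation fact but must track $\alpha$-dependent corrections through each computation, and it also implicitly uses that the congruence $(\ref{mod1})$ may be multiplied on the left by arbitrary operators, a point your factorization handles explicitly.
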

\begin{proof}
  For simplicity, put $Q_{ij} =
  x_{ij}\Del_{x_{ii}}+2(x_{jj}-x_{ii})\Del _{x_{ij}}
  -x_{ij}\Del_{x_{jj}}
  +\sum _{k \neq i, j}
  \left( x_{kj}\Del_{x_{ik}}-x_{ik}\Del_{x_{jk}}\right)
  $
  and 
  $
  R=
  r\Del_r -2\sum _{i\leq j}x_{ij}\Del_{x_{ij}}-n
  $.
  The following equations prove the lemma.
  \begin{eqnarray*}
	&&
	t^\alpha \left(
	Q_{ij}
	+y_j\Del_{y_i} -y_i\Del _{y_j}
	+\Del_{t_i}\Del_{y_j}-\Del_{t_j}\Del_{y_i}\right)\\
	&=&
	\left(
	Q_{ij}
	+y_j\Del_{y_i} -y_i\Del _{y_j}
	+\Del_{t_i}\Del_{y_j}-\Del_{t_j}\Del_{y_i}\right)t^\alpha
	-\alpha_i\Del_{y_j}\Del_t^{(\alpha-e_i)}
	+\alpha_j\Del_{y_i}\Del_t^{(\alpha-e_j)}\\	
	&\equiv&
	\left(
	Q_{ij}
	+y_j\Del_{y_i} -y_i\Del _{y_j}
	+\Del_{t_i}\Del_{y_j}-\Del_{t_j}\Del_{y_i}
	\right)\Del_y^\alpha\\
	&\quad&
	-\alpha_i\Del_{y_j}y^{(\alpha-e_i)}
	+\alpha_j\Del_{y_i}y^{(\alpha-e_j)}
	\quad \bmod \, D\{t_i-\Del_{y_i};1\leq i \leq n+1\}	\\	
	&=&
	\Del_y^\alpha\left(
	Q_{ij}
	+y_j\Del_{y_i} -y_i\Del _{y_j}
	+\Del_{t_i}\Del_{y_j}-\Del_{t_j}\Del_{y_i}\right),\\
&&
  t^\alpha\left(
  R
  -\sum_{i=1}^{n+1} y_i\Del _{y_i}
  +\sum_{i=1}^{n+1} \Del_{t_i}\Del_{y_i}
  \right)\\
&=&
  \left(
  R
  -\sum_{i=1}^{n+1} y_i\Del _{y_i}
  +\sum_{i=1}^{n+1} \Del_{t_i}\Del_{y_i}
  \right)t^\alpha
  -\sum_{i=1}^{n+1} \alpha_i\Del_{y_i}t^{(\alpha -e_i)}\\
&\equiv&
  \left(
  R
  -\sum_{i=1}^{n+1} y_i\Del _{y_i}
  +\sum_{i=1}^{n+1} \Del_{t_i}\Del_{y_i}
  \right)\Del_y^\alpha
  -\sum_{i=1}^{n+1} \alpha_i\Del_{y_i}\Del_y^{(\alpha -e_i)}
  \quad \bmod \, D\{t_i-\Del_{y_i};1\leq i \leq n+1\}\\
&=&
  \Del_y^\alpha\left(
  R
  -\sum_{i=1}^{n+1} y_i\Del _{y_i}
  +\sum_{i=1}^{n+1} \Del_{t_i}\Del_{y_i}
  \right).
\end{eqnarray*}
\qedhere
\end{proof}
\begin{thm}
The integration ideal
$J'$ is generated by the differential operators in $(\ref{annFB}).$
\end{thm}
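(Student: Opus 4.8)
The plan is to prove the theorem by comparing $J'$ with the left ideal $I_{\mathrm{FB}}\subseteq D'$ generated by the operators in $(\ref{annFB})$. The inclusion $I_{\mathrm{FB}}\subseteq J'$ is the easy half and I would dispose of it first. Each operator in $(\ref{annFB})$ already lies in $D'$, and term-by-term comparison of $(\ref{annFB})$ with $(\ref{GenJ})$ shows that it differs from one of the generators of $J$ only by a summand lying in $\Del_{t_1}D+\cdots+\Del_{t_{n+1}}D$, namely the corrections $\Del_{t_i}\Del_{y_j}-\Del_{t_j}\Del_{y_i}$ in the $Q_{ij}$-type generator and $\sum_i\Del_{t_i}\Del_{y_i}$ in the last one. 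Hence each generator of $I_{\mathrm{FB}}$ lies in $D'\cap(J+\sum_i\Del_{t_i}D)=J'$.

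The substance is the reverse inclusion, and I would obtain it, together with the easy one, as an identity $J'=I_{\mathrm{FB}}$ by passing to the quotient by the left ideal $D\{t_i-\Del_{y_i};1\le i\le n+1\}$, which is contained in $J$ and hence in $J+\sum_i\Del_{t_i}D$. Write $q\colon D\to\bar D:=D/D\{t_i-\Del_{y_i}\}$. Lemma $(\ref{mod1})$ reduces each $t$-monomial to the corresponding $\Del_y$-monomial, so I can identify $\bar D$, as a left $D'$-module, with the polynomial ring $R:=D'[\Del_{t_1},\dots,\Del_{t_{n+1}}]$ in the central variables $\Del_{t_i}$ over $D'$. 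Under $q$ the generator $t_i-\Del_{y_i}$ dies, while the remaining generators of $(\ref{GenJ})$ map to their own expressions, which contain no $t$; call the image ideal $\bar J$. The lemma just proved, giving $t^\alpha P\equiv\Del_y^\alpha P$ modulo $D\{t_i-\Del_{y_i}\}$ for each such generator $P$, is exactly what certifies that left multiplication by the $t_i$ does not push the images outside the $R$-span of those generators; consequently $\bar J$ is the honest left ideal of $R$ generated by the $q$-images of the operators of $(\ref{GenJ})$ other than $t_i-\Del_{y_i}$.

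With this in hand, integration becomes the substitution $\Del_{t_i}\mapsto 0$. One checks $q(\sum_i\Del_{t_i}D)=(\Del_{t_1},\dots,\Del_{t_{n+1}})R$, and letting $\pi\colon R\to D'$ be the ring homomorphism killing the $\Del_{t_i}$, I would show that for $L\in D'$ one has $L\in J+\sum_i\Del_{t_i}D$ if and only if $\pi(q(L))=L$ lies in $\pi(\bar J)$; since $\pi(\bar J)\subseteq D'$, this gives $J'=\pi(\bar J)$. Applying $\pi$ to the generators of $\bar J$ annihilates precisely the $\Del_t$-correction terms and returns the operators of $(\ref{annFB})$, so $\pi(\bar J)=I_{\mathrm{FB}}$ and the proof is complete.

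The main obstacle is the middle step. Because $\sum_i\Del_{t_i}D$ is only a right ideal and $D\{t_i-\Del_{y_i}\}$ is not two-sided (indeed $(t_i-\Del_{y_i})y_i-y_i(t_i-\Del_{y_i})=-1$), the sum $J+\sum_i\Del_{t_i}D$ is not a left ideal of $D$, so one cannot simply quotient rings and read off generators. Both the identification of $\bar D$ with $R$ and the verification that $\bar J$ acquires no unexpected extra elements under left multiplication by the $t_i$ rest on Lemma $(\ref{mod1})$ and the subsequent lemma, and it is precisely here that the argument exploits the special shape of the generators $(\ref{GenJ})$ rather than running Oaku's integration algorithm verbatim, which for general $n$ cannot be carried out step by step.
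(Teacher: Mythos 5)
Your strategy is sound, and in substance it is the paper's own argument recast in module-theoretic language: the easy inclusion, the use of Lemma (\ref{mod1}) and the subsequent lemma to trade $t$-coefficients for $\Del_y$-coefficients, and the final projection killing the $\Del_{t_i}$ all have exact counterparts in the paper's proof. But there is one genuine gap, and it sits at the pivot of your construction: the claimed identification of $\bar D = D/D\{t_i-\Del_{y_i}\}$ with $R = D'[\Del_{t_1},\dots,\Del_{t_{n+1}}]$. Lemma (\ref{mod1}), together with the commutator bookkeeping needed to handle monomials $t^\alpha\Del_t^\beta$, gives only that the composite $R\subset D\to\bar D$ is \emph{surjective}; it says nothing about injectivity, i.e.\ about the statement $R\cap D\{t_i-\Del_{y_i}\}=0$. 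Without injectivity your map $\pi$ cannot be applied to classes: $q(L)$ would have several representatives in $R$, on which $\pi$ may disagree, so the equivalence ``$L\in J+\sum_i\Del_{t_i}D$ iff $L\in\pi(\bar J)$'' breaks down. Concretely, if some nonzero $t$-free operator $\kappa$ lay in $D\{t_i-\Del_{y_i}\}$, then $\bar D\cong R/(R\cap D\{t_i-\Del_{y_i}\})$, the ``ideal'' $\bar J$ would have to be replaced by its full preimage in $R$ (which contains $\kappa$), and $\pi$ of that preimage could strictly exceed the ideal generated by $(\ref{annFB})$; the conclusion $J'=I_{\mathrm{FB}}$ would not follow.

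This missing fact is not a formality: it is precisely the step for which the paper deploys its Gr\"obner-basis argument. Give $t_i$ weight $1$ and every other variable weight $0$; then $\{t_i-\Del_{y_i}\}$ is a Gr\"obner basis of the left ideal $D\{t_i-\Del_{y_i}\}$ for a term order refining this weight, its initial ideal is generated by $t_1,\dots,t_{n+1}$, so every nonzero element of that ideal has leading term divisible by some $t_i$; a $t$-free operator (an element of $R$, in particular of $D'$) in the ideal must therefore be $0$. Adding this yields $D=R\oplus D\{t_i-\Del_{y_i}\}$ as left $D'$-modules, which is exactly the injectivity you need, and your proof then closes correctly. Note, however, that at that point you will have reproduced all three ingredients of the paper's proof (the easy inclusion, the rewriting lemma, and the triviality of the intersection of $t$-free operators with $D\{t_i-\Del_{y_i}\}$), so the quotient-module packaging, while arguably cleaner than the paper's explicit manipulation of representations, is a reorganization rather than a different route.
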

\begin{proof}
Let $F$ and $F'$ be the set consisting of the differential operators 
$(\ref{GenJ})$ and $(\ref{annFB})$ respectively.
The inclusion $D'\cdot F' \subset J'$ is obvious.
We need to show the opposite inclusion
$D'\cdot F' \supset J'$.
If a differential operator $P$ is contained in $J'$, 
then $P$ can be written as
$$
P = \sum_i Q_iP_i + \sum_j \Del_{t_j}R_j \quad
(P_i\in F, \, Q_i \in D, R_j\in D), 
$$
from the definition of $J'$.
Without loss of generality, we can assume that no term of $Q_i$ contains
$\Del_t$.
Note that
$$
t^\alpha P_i \equiv \Del_y^\alpha P_i 
\quad \bmod \, D\{t_k-\Del_{y_k};1\leq k \leq n+1\}; 
$$
then, $P$ can be written as 
$$
P = \sum_i Q'_iP_i + \sum_j \Del_{t_j}R_j + \sum_k S_k(t_k-\Del_{y_k}) \quad
(P_i\in F, \, Q_i'\in D', \, R_j \in D, S_k \in D). 
$$
Since all differential operators in $F$ except $t_i-\Del_{y_i}$ have the form
$
P' +\sum_i \Del_{t_i}U'_i \quad (P' \in F', \, U'_i \in D'), 
$
$P$ can be written as
$$
P = \sum_i Q'_iP'_i + \sum_j \Del_{t_j}R_j + \sum_k S_k(t_k-\Del_{y_k}) \quad
(P_i\in F, \, Q_i'\in D', \, R_j \in D, S_k \in D).
$$
Moving some terms to the left-hand side, we obtain
$$
P -\sum_i Q_i'P_i' - \sum_k S_k(t_k-\Del_{y_k})= \sum_j \Del_{t_j}R_j  \quad
(P_i'\in F', \, Q_i'\in D', \, R_j \in D, S_k \in D) 
$$
Without loss of generality, if we assume that no term of $S_k$ contains
$\Del_t$,
then the left-hand side of the equation does not contain $\Del_t$.
Expanding both sides and comparing the coefficients, we get 
$\sum_j \Del_{t_j}R_j=0$, in other words, we obtain
$$
P -\sum_i Q_i'P_i' = \sum_k S_k(t_k-\Del_{y_k})\quad
(P_i'\in F', \, Q_i'\in D', \,S_k \in D ). 
$$
The right-hand side of this equation is included in the left ideal 
$D\cdot\{t_i-\Del_{y_i}\vert 1\leq i \leq n+1\}$
in $D$.
Let the weight of $t_i$ be $1$ and that of other variables be $0$, 
and consider a term order $\prec$ with this weight.
The Gr\"{o}bner basis of 
$D\cdot\{t_i-\Del_{y_i}\vert 1\leq i \leq n+1\}$
with this order is
$\{t_i-\Del_{y_i}\vert 1\leq i \leq n+1\}, $
and the initial ideal is generated by
$\{t_i\vert 1\leq i \leq n+1\}.$
Hence, the leading term of $P -\sum Q_k'P_k' \in D'$ 
with respect to the order $\prec$
must divide some $t_i$.
However, the differential operator in $D'$ 
which satisfies this condition is only $0$.
Then, we have $P\in D'F'$.
\qedhere
\end{proof}
\begin{cor}
The integration ideal $J'$ is a holonomic ideal.
\end{cor}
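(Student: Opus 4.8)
The plan is to deduce the corollary as an immediate formal consequence of the Theorem together with the general stability of holonomicity under integration. First I would recall that the left ideal $J$ is already known to be a holonomic ideal in $D$. Indeed, $J$ arises from the holonomic ideal $I \subset \Ann(\mu_r)$ of Section~\ref{section2}, whose holonomicity is the content of Proposition~1, by applying the exponential-twist lemma of Section~\ref{section3} (the substitution $\Del_{z_i} \mapsto \Del_{z_i} - f_i$ associated with the polynomial $g$). That lemma guarantees both $J \subset \Ann(\exp(g)\mu_r)$ and, crucially, that $J$ remains holonomic. Thus the hypothesis required to invoke the general theory is in hand before we even reach the corollary.

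Next I would appeal to the definition set out at the start of Section~\ref{section4}: by construction $J' = D' \cap (J + \{\Del_{t_1}, \dots, \Del_{t_{n+1}}\}\cdot D)$ is precisely the integration ideal of $J$ with respect to the variables $t_1, \dots, t_{n+1}$ being integrated out. This matches verbatim the general form $(I + \Del_{d'+1}D_d + \cdots + \Del_d D_d)\cap D_{d'}$ recalled in the Introduction, under the identification of $\{z_1,\dots,z_{d'}\}$ with the $x,y,r$ variables and $\{z_{d'+1},\dots,z_d\}$ with the $t$ variables. Hence the only thing to check is that $J'$ genuinely is the integration ideal of a holonomic ideal, and both of these conditions have been verified.

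Finally I would invoke the known fact that the integration ideal of a holonomic ideal is again holonomic (see, e.g., \cite[Chapter~1]{Bjork}, \cite[\S 5.5]{SST}). Since $J$ is holonomic and $J'$ is its integration ideal, it follows at once that $J'$ is holonomic in $D'$, which is the assertion of the corollary. I do not expect any genuine obstacle at this stage: the corollary is a direct application of results already established, and all the substantive work lies earlier — in Proposition~1 and the twist lemma, which secure the holonomicity of $J$, and in the Theorem, which identifies the explicit generators $(\ref{annFB})$ of $J'$. Combining the corollary with the Theorem then yields that $(\ref{annFB})$ generates a holonomic ideal for every $n$, thereby proving the conjecture of \cite{Sei}.
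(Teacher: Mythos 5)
Your proof is correct and follows exactly the paper's own reasoning: $J$ is holonomic (Proposition~1 plus the exponential-twist lemma), $J'$ is by definition its integration ideal, and the general fact that integration ideals of holonomic ideals are holonomic (cited in Section~\ref{section4} from \cite{Bjork}) finishes the argument. No differences worth noting.
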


Affiliation: Department of Mathematics, Kobe University and JST crest Hibi projict\\
E--mail address: tkoyama@math.kobe-u.ac.jp

\begin{thebibliography}{99}
\bibitem{CLO} D. Cox, J. Little, D. O'Shea: Ideals, Varieties, and Algorithms, Springer, 1992.
\bibitem{Bjork} J.E. Bj\"{o}rk: Rings of differential operators. North-Holland, New York, 1979
\bibitem{Kent} J.T. Kent: The Fisher-Bingham Distribution on the Sphere, Journal of the Royal Statistical Society. Series B 44(1982), 71-80.
\bibitem{Mardia} K.V. Mardia, P.E. Jupp: Directional Statistics, 2000, John Wiley \&Sons.
\bibitem{SST}M. Saito, B. strumfels, N. Takayama: Gr\"{o}bner Deformations of Hypergeometric Differential Equations, Springer, 2000.
\bibitem{Oaku1}T. Oaku: Computation of the characteristic variety and the singular locus of a system of differential equations with polynomial coefficients, Japan Journal of Industrial and Applied Mathematics {\bf 11} (1994),485-497.
\bibitem{Oaku2}T. Oaku: Algorithms for b-functions, restrictions, and algebraic local cohomology groups of D-modules, Advances in Applied Mathematics {\bf 19} (1997), 61-105.
\bibitem{OST} T. Oaku, Y. Shiraki, N. Takayama: Algorithms for D-modules and Numerical Analysis, Z.M.Li, W.Sit(editors), Computer Mathematics, World scientific, 2003, 23-39.
\bibitem{Sei} H. Nakayama, K. Nishiyama, M. Noro, K. Ohara, T. Sei, N. Takayama, A. Takemura: Holonomic Gradient Descent and its Application to the Fisher-Bingham Integral, 
to appear in Advances in Applied Mathematics
http://dx.doi.org/10.1016/j.aam.2011.03.001
\end{thebibliography}
\end{document}